


\documentclass{amsart}
\usepackage{amsmath,amsthm,amssymb}
\usepackage{graphicx}

\newtheorem{theorem}{Theorem}[section]
\newtheorem{proposition}[theorem]{Proposition}
\newtheorem{lemma}[theorem]{Lemma}
\newtheorem{corollary}[theorem]{Corollary}




\theoremstyle{definition}
\newtheorem{definition}[theorem]{Definition}
\newtheorem{remark}[theorem]{Remark}

\begin{document}

\title[Gluck twisting 4-manifolds with odd intersection form]{Gluck twisting 4-manifolds with\\ odd intersection form}

\author[AKBULUT and YASUI]{Selman Akbulut and Kouichi Yasui}
\thanks{The first named author is partially supported by NSF grant DMS 0905917, and the second named author was partially supported by KAKENHI 23840027.}
\date{March 8, 2013}
\subjclass[2000]{Primary~57R55, Secondary~57R65}
\keywords{4-manifold; smooth structure; Gluck twist}
\address{Department~of~Mathematics, Michigan State University, E.Lansing, MI, 48824, USA}
\email{akbulut@math.msu.edu}

\address{Department~of~Mathematics, Graduate School~of~Science, Hiroshima~University, 1-3-1 Kagamiyama, Higashi-Hiroshima, Hiroshima, 739-8526, Japan}\email{kyasui@hiroshima-u.ac.jp}


\begin{abstract}

We give a simple criterion when a Gluck twisting an odd smooth $4$-manifold along a $2$-sphere $S\subset X$ does not change its diffeomorphism type. We obtain this by  handlebody techniques and plug twisting operation, getting a slightly stronger version of the known fact that Gluck twisting of a $2$-sphere $S\subset X$ of a compact smooth $4$-manifold with an odd spherical class, in the complement of $S$, does not change the diffeomorphism type of $X$. This is the best possible result on Gluck twisting manifolds with odd homology classes.


 \end{abstract}

\maketitle

\vspace{-.2in}

\section{Introduction}\label{sec:intro} An interesting problem of 4-dimensional topology is to determine whether $S^4$ has an exotic smooth structure. The Gluck twisting operation ~\cite{Gl2} has been regarded as a potential method to produce an exotic $S^4$ (cf.\ \cite{GS}). This is the operation of removing an embedded $S^2\times D^2$ from a 4-manifold and regluing it via the non-trivial diffeomorphism of $S^2\times S^1$. In \cite{A3} by Gluck twisting operation, an exotic pair of non-orientable 4-manifolds was constructed. On the other hand, many families of knotted $2$-spheres do not change $S^4$ by Gluck twisting (e.g.\  \cite{Gl2}, \cite{Gor},  \cite{A4}, \cite{G2}, \cite{NS}). Indeed, no exotic pair of orientable 4-manifolds, which are related each other by Gluck twist, known to exits.
%
%
%
%
It is thus interesting to see whether Gluck twists can produce an exotic pair of orientable 4-manifolds. 


\vspace{.05in}

For a smooth $4$-manifold $X$ and  a smoothly imbedded $2$-sphere with trivial normal bundle $S\subset X$, let $\nu(S)\cong S^2\times D^2$ denote the tubular neighborhood of $S$. Let ${X}_{{S}}^\circ$  be the manifold obtained by surgering $X$ along $S$, that is
$${X}_{{S}}^\circ= (X-\nu(S))\cup D^{3}\times S^{1}.$$
%
%
We call a second homology class $\alpha$ of $X$ is sherical, if $\alpha$ is representd by a continuous map of the 2-sphere. In this paper, by using  handlebody techniques and plug twisting operation, we give an alternative proof of the known result (Corollary~\ref{intro:cor}). In fact, this approach gives the slightly stronger result below. 
\begin{theorem}\label{intro:thm} Let $X$ be any compact connected smooth $4$-manifold, and $S\subset X$ be a smoothly embedded $2$-sphere with trivial normal bundle. Then, Gluck twisting $X$ along $S$ does not change the diffeomorphism type of $X$, provided there is a $2$-dimensional spherical homology class in $X_{S}^\circ$ with odd self intersection. \end{theorem}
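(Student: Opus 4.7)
The plan is to build a self-diffeomorphism $\Phi$ of the complement $W := X \setminus \nu(S)$ whose restriction to $\partial W = S^{2} \times S^{1}$ is the Gluck diffeomorphism $\tau$. Extending $\Phi$ by the identity over $\nu(S) \cong S^{2} \times D^{2}$ will then identify $X = W \cup_{\mathrm{id}} \nu(S)$ with $X_{S} = W \cup_{\tau} \nu(S)$.

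First I would push the hypothesized sphere into $W$. Let $\gamma := \{0\} \times S^{1}$ be the core of the $D^{3} \times S^{1}$ glued into $X_{S}^{\circ}$ by the surgery, so that $\nu(\gamma) = D^{3} \times S^{1}$ and $X_{S}^{\circ} \setminus \nu(\gamma) = W$. If $f \colon S^{2} \to X_{S}^{\circ}$ represents the given odd spherical class, then transversality (since $\dim f + \dim \gamma = 3 < 4$) lets me assume $f(S^{2}) \cap \gamma = \emptyset$, and the deformation retraction of $D^{3} \times S^{1} \setminus \gamma$ onto $\partial W$ homotopes $f$ into $W$. The algebraic double-point count is unchanged, producing an immersed sphere $\Sigma \subset W$ of odd self-intersection. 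I would next recast the original problem as a framing comparison on $\gamma$: both $X$ and $X_{S}$ are obtained from $X_{S}^{\circ}$ by surgering $\gamma$, and the two framings of its normal bundle (differing by the generator of $\pi_{1}(\mathrm{SO}(3)) \cong \mathbb{Z}/2$) recover $X$ and $X_{S}$ respectively. Equivalently, presenting $X_{S}^{\circ}$ in a Kirby diagram with a dotted $1$-handle for $\gamma$, one must show that cancellation of this $1$-handle by a meridional $2$-handle with either of the two framings (mod $2$) yields the same $4$-manifold.

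This is where $\Sigma$ enters, via the plug-twisting machinery featured in the paper. I would enlarge the plug $(\nu(S), \tau)$ by adjoining a tube from $\partial \nu(S)$ to $\Sigma$ together with a regular neighborhood of $\Sigma$, and argue that the Gluck involution extends across this enlarged plug exactly when $\Sigma \cdot \Sigma$ is odd. Concretely, I would draw the Kirby diagram of the enlarged plug, resolve each transverse self-intersection of $\Sigma$ at the cost of a local $\pm 1$-framed connect-summand, and verify by Kirby moves that the Gluck twist is absorbed. The hard part is precisely this parity-sensitive extension step: the Gluck twist represents the nontrivial class of $\pi_{1}(\mathrm{SO}(3)) = \mathbb{Z}/2$, so an even self-intersection sphere would leave this $\mathbb{Z}/2$-obstruction intact, while an odd self-intersection supplies the framing twist needed to kill it. Piecing together the resulting self-diffeomorphism of the enlarged plug with the identity on the rest of $W$ then delivers the desired $\Phi$.
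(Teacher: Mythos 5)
Your overall architecture is in fact the paper's: the authors also localize the Gluck twist to a compact piece containing $\nu(S)$, an arc, and (a neighborhood of) a $2$-handle representing the odd spherical class, and show that the twist is trivial on that piece rel its boundary (their subhandlebody $X'$ plays the role of your ``enlarged plug''). Your preliminary reductions are also sound: pushing the sphere off the surgery circle $\gamma$ by transversality to get an immersed $\Sigma\subset X-\nu(S)$ of odd square, and viewing $X$ and $X_S$ as the two framed surgeries on $\gamma\subset X_S^{\circ}$.

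The gap is that the step you yourself flag as ``the hard part'' --- that the Gluck involution extends across the enlarged plug precisely when $\Sigma\cdot\Sigma$ is odd --- is the entire content of the theorem, and you support it only with a $\pi_1(\mathrm{SO}(3))$ parity heuristic plus an intermediate move that is not valid: ``resolving each transverse self-intersection of $\Sigma$ at the cost of a local $\pm1$-framed connect-summand'' is a blow-up and changes the ambient $4$-manifold. In the correct treatment the double points survive as clasp singularities of the attaching circle of the $2$-handle whose core, capped by an immersed null-homotopy disk, is $\Sigma$; what saves you is that this attaching circle is null-homotopic in the $0$- and $1$-handles, not that the double points can be removed. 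The mechanism that actually converts ``odd framing'' into ``twist absorbed'' is also missing: one draws the Gluck twist as a dot/zero exchange after introducing a canceling dotted circle $C$ with a $\pm1$-framed meridian, slides that meridian over the odd-framed handle representing $[\Sigma]$ to obtain an even-framed curve $K_0$ homotopic to the meridian of $C$ in the $1$-skeleton, then uses slides over $C$ (which realize crossing changes and shift the framing by even amounts) to normalize $K_0$ to a $0$-framed geometric meridian, at which point the dot/zero exchange is visibly a diffeomorphism rel boundary. Finally, your translation of the two surgeries on $\gamma$ into ``cancellation by a meridional $2$-handle with either of the two framings'' is inaccurate --- cancelling a $1$-handle against a geometrically cancelling $2$-handle is framing-independent; the two surgeries correspond to the untwisted versus twisted dot/zero exchange, which is exactly what the auxiliary canceling pair in the paper encodes. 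As written, the proposal identifies the right skeleton but leaves the theorem unproved.
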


\newpage
\begin{remark} Note that this theorem is the best possible result about Gluck twisting odd manifolds . This is because \cite{A3} implies that one can get an example of a $2$-sphere in an odd manifold $S\subset X$, where Gluck twisting $X$ along $S$ can change its diffeomorphism type (consequently $X_{S}^{\circ}$ does not have any odd spherical class). Note that the orientibility is not part of the hypothesis of our theorem.   

\vspace{.05in}
Also note that the condition of having a spherical class in $H_{2}(X_{S}^\circ; \mathbb{Z})$ with odd self intersection is equivalent to having a class of odd self intersection in the cohomology group  $H^{2}(X_{S}^\circ, \Lambda)$ with coefficients, where $\Lambda$ is the group ring $\mathbb{Z}[\pi_{1}(X_{S}^\circ)]$, since $$H^{2}(X_{S}^\circ, \Lambda)\cong H^{2}_{c}(\tilde{X}_{S}^\circ, \mathbb{Z})\cong H_{2}(\tilde{X}_{S}^\circ, \mathbb{Z})\cong \pi_{2}(X_{S}^\circ),$$
where $\tilde{X}_{S}^\circ$ denotes the universal cover, and $H^{2}_{c}$  cohomology with compact support.\end{remark}
\vspace{.05in}

\begin{corollary}[cf.~\cite{GS}]\label{intro:cor} Let $X$ and $S$ be as in Theorem~\ref{intro:thm}. If $X-\nu(S)$ has a simply connected codimension 0 submanifold with odd intersection form, then Gluck twisting $X$ along $S$ does not change the diffeomorphism type of $X$. 
\end{corollary}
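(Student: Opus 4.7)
The plan is to derive the corollary directly from Theorem~\ref{intro:thm} by producing, inside $X_S^\circ$, a spherical 2-dimensional homology class with odd self-intersection. Let $W \subset X - \nu(S)$ denote the simply connected codimension~0 submanifold whose intersection form is odd, as furnished by the hypothesis.

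First, since the intersection form $Q_W$ on $H_2(W;\mathbb{Z})$ is odd, I would pick a class $\alpha \in H_2(W;\mathbb{Z})$ with $Q_W(\alpha,\alpha)$ odd. Simple connectivity of $W$ together with the Hurewicz theorem gives an isomorphism $\pi_2(W) \cong H_2(W;\mathbb{Z})$, so $\alpha$ is represented by some continuous map $f\colon S^2 \to W$. Pushing forward along the inclusions
\[
i\colon W \hookrightarrow X - \nu(S) \hookrightarrow X_S^\circ
\]
produces a class $i_*\alpha \in H_2(X_S^\circ;\mathbb{Z})$, which is still spherical because it is represented by $i \circ f\colon S^2 \to X_S^\circ$.

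Next I would verify that $i_*\alpha$ has odd self-intersection in $X_S^\circ$. This is essentially automatic: two transverse geometric representatives of $\alpha$ may be chosen inside $W$, and their algebraic (in particular, mod~2) self-intersection count is a local invariant that is unaffected by enlarging the ambient manifold. Hence the self-intersection of $i_*\alpha$ in $X_S^\circ$ agrees with $Q_W(\alpha,\alpha) \pmod 2$ and is odd. With this, the hypothesis of Theorem~\ref{intro:thm} is met, and the theorem gives that Gluck twisting $X$ along $S$ preserves its diffeomorphism type.

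I do not anticipate a genuine obstacle: the hypothesis of the corollary is essentially engineered so that simple connectivity promotes an odd class to a spherical one via Hurewicz, and inclusions of codimension~0 submanifolds preserve self-intersection numbers mod~2 (so the possible non-orientability of $X$, and hence of $X_S^\circ$, causes no trouble). The corollary is thus a direct unpackaging of Theorem~\ref{intro:thm}.
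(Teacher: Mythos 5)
Your proposal is correct and matches the paper's intent exactly: the paper offers no separate proof of Corollary~\ref{intro:cor}, presenting it as an immediate consequence of Theorem~\ref{intro:thm}, and the deduction is precisely your Hurewicz argument (simple connectivity of $W$ makes the odd class spherical, and inclusion of the codimension~0 submanifold into $X_S^\circ$ preserves the self-intersection). Nothing further is needed.
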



\textbf{Acknowledgements.} This work was done during the second author's stay at Michigan State University in March 2012 and March 2013. He would like to thank their hospitality. The authors would like to thank Robert Gompf for pointing out an error in an earlier draft of this paper. 

\section{Gluck twist}

In this paper we will use the dotted circle notation of 1-handles. Now let us recall the definition of the Gluck twisting operation:
Let $\varphi: S^2\times S^1\to S^2\times S^1$ be the self-diffeomorphism difined by $\varphi(x, \theta)=(\rho_\theta(x),\theta)$, where $\rho_\theta$ denotes the $\theta$ rotation of the unit 2-sphere about the axis through its poles. 
\begin{definition}[Gluck~\cite{Gl2}. cf.\ \cite{GS}]\label{def:gluck} Let $X$ be a smooth $4$-manifold, and let $S$ be a 2-sphere in $X$ such that its self-intersecion number is zero. Let $X_S$  be the smooth $4$-manifold obtained from $X$ by removing the regular neighborhood $\nu(S)\cong S^2\times D^2$ of $S$, and then  regluing it via $\varphi$. This construction  $X \mapsto X_{S}$ is called the  Gluck twisting of $X$ along $S$.
\end{definition}

There are several ways to draw a handle diagram of Gluck twists, 
here we recall the simple method pointed out in \cite{AY1}. The usefulness of this new method will be evident in the proof of Theorem~\ref{intro:thm}

\vspace{.05in}

Let $X$ and $S$ be as in Definition~\ref{def:gluck}. Consider a handle decomposition
\begin{equation*}
X=\nu(S)\cup \textnormal{handles}, 
\end{equation*}
where we represent $\nu(S)\cong S^2\times D^2$ by a $0$-framed unknot in the handle diagram. 
Introduce a canceling 1- and 2-handle pair locally as in the right picture of Figure~\ref{fig1}. Surger the introduced $S^1\times D^3$ to $S^2\times D^2$, and surger $\nu(S)$ to $S^1\times D^3$ (i.e.\ exchange the dot and the $0$ as in the bottom picture). Note that this is a plug twist along $(W_{-1,0},\, f_{-1,0})$ (see \cite{AY1}).  This operation in fact describes the Gluck twist:
\begin{figure}[ht!]
\begin{center}
\includegraphics[width=3.5in]{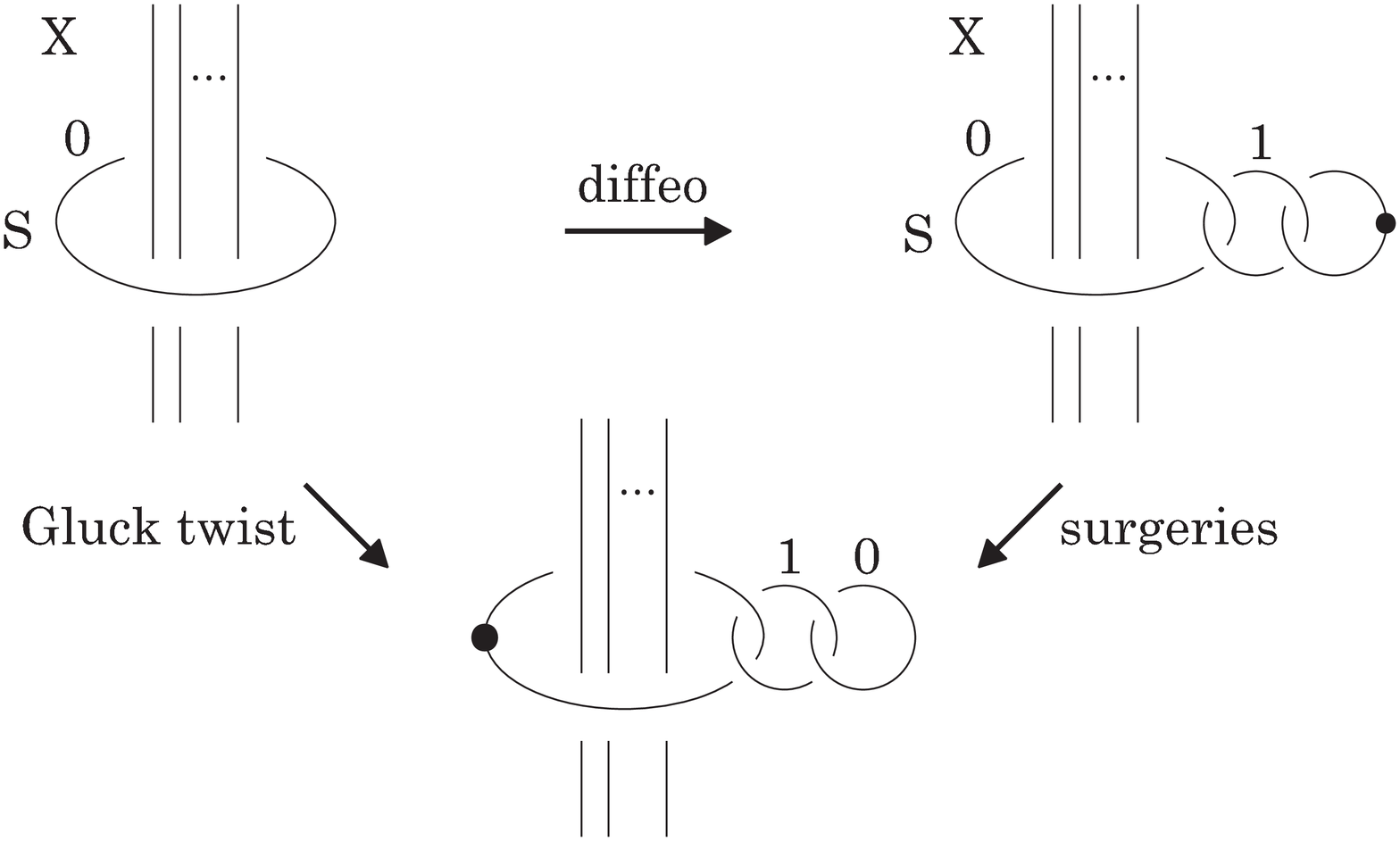}
\caption{Gluck twist to $X$ along $S$}
\label{fig1}
\end{center}
\end{figure}
\begin{lemma}\label{gluck_diagram} The resulting $4$-manifold is diffeomorphic to the Gluck twist $X_S$. If $X$ has boundary, then we may assume that the diffeomorphism is the identity on the boundary. 

\end{lemma}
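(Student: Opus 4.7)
The plan is to verify the statement by tracking the two elementary operations at the 4-manifold level. First, introducing a canceling 1-/2-handle pair is a trivial modification: it does not change the diffeomorphism type of $X$ and is the identity outside a small ball, so after Step~2 of the construction we still have the original $X$, only with an enlarged handle decomposition. All the content of the lemma is therefore in the dot-zero exchange.

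The key conceptual point is that both $S^{2}\times D^{2}$ and $S^{1}\times D^{3}$ are smooth fillings of $S^{2}\times S^{1}$. In Kirby notation, a 0-framed unknot encodes the first filling while a dotted unknot encodes the second, both attached along the standard unknot in $S^{3}$. Exchanging the dot and the $0$ on an unknotted component therefore amounts to removing one filling of an $S^{2}\times S^{1}$ sitting in the interior of the manifold and gluing in the other, via the identity on the boundary as recorded in the diagram. Applying this interpretation to our two components: the neighborhood $\nu(S)=S^{2}\times D^{2}$ (represented by the 0-framed unknot) is excised and $S^{1}\times D^{3}$ is glued in its place, and simultaneously the introduced $S^{1}\times D^{3}$ is excised and replaced by a new $S^{2}\times D^{2}$ (hence a new 2-sphere $S'$). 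The first half of this operation is precisely the surgery $X\leadsto X_{S}^{\circ}$, and the second half attaches a 2-sphere neighborhood back, so the result has the form $(X_{S}^{\circ}\setminus(S^{1}\times D^{3}))\cup(S^{2}\times D^{2})$.

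It remains to identify the gluing of the new $S^{2}\times D^{2}$ with the Gluck twist $\varphi$. Since $\pi_{0}\mathrm{Diff}(S^{2}\times S^{1})=\mathbb{Z}/2$, generated by $\varphi$, the ambient gluing is either the identity (in which case the output would be $X$ itself) or the twist. To decide between these, I would track the meridian of $S'$: on the boundary of the new $\nu(S')$, the $S^{1}$-factor corresponds (via the cancelling-pair configuration) to a loop that, viewed back in $\nu(S)$, bounds a disk hitting $S$ once but whose framing, read off from the dot-zero exchange, differs from the product framing by one full twist. This nontrivial framing change is exactly the generator of $\pi_{1}\mathrm{SO}(3)$, i.e.\ $\varphi$. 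Thus the resulting manifold is diffeomorphic to $X_{S}$.

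The main obstacle in this argument is this last framing computation: distinguishing the Gluck twist from the identity via the dot-zero exchange. Everything else (the cancelation, the interpretation of the exchange as a filling swap) is formal. Finally, because the entire construction is supported in a regular neighborhood of $\nu(S)$ in the interior of $X$, the produced diffeomorphism can be chosen to be the identity on $\partial X$, yielding the boundary-identity claim.
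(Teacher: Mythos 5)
Your overall strategy matches the paper's: reduce to the observation that the dot--zero exchange removes $\nu(S)\cong S^2\times D^2$ and reglues it by some boundary diffeomorphism $\psi$, note that by Gluck's theorem on the diffeotopy group of $S^2\times S^1$ the result can only be $X$ or $X_S$, and then decide which. However, there are two problems. The minor one: $\pi_0\mathrm{Diff}(S^2\times S^1)$ is not $\mathbb{Z}/2$; by Gluck's theorem it is $(\mathbb{Z}/2)^3$ (generated by reflections of each factor together with $\varphi$). The dichotomy ``$X$ or $X_S$'' is still correct, but it rests on the finer statement that every self-diffeomorphism of $S^2\times S^1$ either extends over $S^2\times D^2$ or does so after composing with $\varphi$ --- which is exactly how the paper phrases the reduction (one only needs to check that $\psi$ does not extend over $S^2\times D^2$).

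The major problem is the step you yourself flag as ``the main obstacle'': distinguishing $\varphi$ from the identity. Your proposal to ``track the meridian of $S'$'' and read off a framing that ``differs from the product framing by one full twist'' is a description of what must be verified, not a verification; no computation is actually performed, and as written it would be difficult to make rigorous directly from the diagram. The paper sidesteps this entirely with a test case: apply the identical local procedure to the standard handle decomposition of $S^2\times S^2$ with $S=S^2\times 0$. The diagram manipulation visibly produces $\mathbb{CP}^2\#\overline{\mathbb{CP}^2}$, whose intersection form is odd, so it is not homeomorphic to $S^2\times S^2$. If $\psi$ extended over $S^2\times D^2$ the output would have been diffeomorphic to the input, so $\psi$ does not extend, and the operation is the Gluck twist. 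This is the one concrete computation your argument is missing, and without it (or an honest framing calculation in its place) the lemma is not proved. The boundary statement is fine: the whole construction is supported in the interior, as you say.
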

\begin{proof}One can easily see that the above operation is removing $\nu(S)\cong S^2\times D^2$ and regluing it via a self-diffeomorphism (say $\psi$) of $S^2\times S^1$. A theorem of Gluck \cite{Gl2} (cf.\ \cite{DG}) on the diffeotopy group of $S^2\times S^1$ implies that, we only need to check $\psi$ does not extend to a self-homeomorphism of $S^2\times D^2$. This can be seen by applying the above procedure to the standard handle decomposition of $S^2\times S^2$ and $S^2\times 0$:  the resulting manifold $\mathbb{CP}^2\#\overline{\mathbb{C}\mathbb{P}^2}$ is not homeomorphic to $S^2\times S^2$.  
\end{proof}
\begin{remark}We can similarly prove that the Gluck twist is obtained by the plug twist along $(W_{2m+1,0}, f_{2m+1,0})$ for any integer $m$. \end{remark}
\section{Proof of the theorem}\label{sec:proof}
For a 4-dimentional handlebody $Z$, we denote by $Z^{(i)}$ the subhandlebody of $Z$ consisting of its handles with their indices $\leq  i$. We first prepare the lemma below. 
\begin{lemma}Let $Z$ be a smooth compact connected 4-dimentional handlebody, and let $f:S^2\to Z$ be a continuous map. We denote by $[f]$ the second homology class of $Z$ given by $f$. Suppose $[f]\neq 0$. Then, if necessary after introducing canceling 2- and 3-handle pairs and sliding 2-handles, $[f]$ is represented by a 2-handle of $Z$ whose attaching circle is null-homotoic in $Z^{(1)}$. 
\end{lemma}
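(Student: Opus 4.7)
The plan is to use the map $f$ itself to prescribe a sequence of handle slides that consolidate the class $[f]$ onto a single newly-introduced 2-handle whose attaching circle is null-homotopic in $Z^{(1)}$.

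First, I would homotope $f$ into the 2-skeleton $Z^{(2)}$ (possible since $S^2$ is $2$-dimensional) and then perturb it to be transverse to the cocores of all 2-handles, producing a finite set of signed transverse intersection points $p_1,\dots,p_m$, where $p_k$ lies in the cocore of a 2-handle $h_{i_k}$ with sign $\epsilon_k\in\{\pm 1\}$. Cutting small open disk neighborhoods of the $p_k$'s out of $S^2$ leaves a planar surface $P$, and after radially retracting each 2-handle away from its cocore the restriction $f|_P$ maps into $Z^{(1)}$ with each boundary circle sent to a parametrized copy of $\alpha_{i_k}^{\epsilon_k}$. Choosing a basepoint in $P$ together with a system of arcs to each boundary component expresses the identity of $\pi_1(P)$ as an ordered product $\prod_k\beta_k$ of conjugated boundary loops (since $P$ is planar), so pushing forward by $f$ yields the key relation $\prod_k f_*(\beta_k)=1$ in $\pi_1(Z^{(1)})$.

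Next, I would introduce a canceling $2$-$3$ handle pair, producing an auxiliary 2-handle $h_0$ attached along a small unknot $\alpha_0$ in a $0$-handle (so $\alpha_0$ is null-homotopic and $[h_0]=0$ initially). Perform one handle slide of $h_0$ over $h_{i_k}$ for each $k$, with sign $\epsilon_k$, choosing the band so that the conjugating element contributed by the slide is precisely $f_*(\mathrm{arc}_k)$. Each such slide adds $\epsilon_k[h_{i_k}]$ to $[h_0]$, so after all $m$ slides $[h_0]=\sum_k \epsilon_k[h_{i_k}]=[f]$ in $H_2(Z)$. Moreover, the new attaching circle of $h_0$ then represents $[\alpha_0]\cdot\prod_k f_*(\beta_k^{\epsilon_k})=1\cdot 1=1$ in $\pi_1(Z^{(1)})$, hence is null-homotopic as desired.

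The hard part will be the combinatorial bookkeeping to align the arc system on $P$ with the bands and orderings of the handle slides. This reduces to a standard Kirby-calculus verification: since the bands may be chosen freely in the boundary $\partial(Z^{(1)}\cup\mathrm{prior\ handles})$ and the inclusion $\partial Z^{(1)}\hookrightarrow Z^{(1)}$ induces an isomorphism on fundamental groups (both being free of the same rank), any desired conjugating element can be realized, and the slides can be ordered to match any chosen total ordering of the arc system on $P$.
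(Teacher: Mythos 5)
Your argument is correct and is essentially the paper's proof in algebraic dress: both push $f$ into $Z^{(2)}$, record its signed intersections with the cocores of the $2$-handles, introduce canceling $2$-/$3$-handle pairs and slide along bands read off from $f$ on the planar complement of the preimage disks, and deduce null-homotopy of the resulting attaching circle from the fact that that complement maps into $Z^{(1)}$. The only (cosmetic) difference is that the paper realizes the consolidated handle geometrically as $f(\partial\Sigma)$ for a disk $\Sigma\subset S^2$ built from the preimage disks and connecting arcs, with the complementary disk $S^2\setminus\Sigma$ directly witnessing the null-homotopy, whereas you encode the same data as the planar-surface relation $\prod_k f_*(\beta_k)=1$ in $\pi_1(Z^{(1)})$.
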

\begin{proof}We may assume that $f$ is an immersion by homotopy. Furthermore, we may assume that the image of $f$ is contained in $Z^{(2)}$, since the cocore of a 3-handle is codimention 3. Let $p_1,p_2,\dots, p_n$ be the transverse intersection points of $f(S^2)$ and the cocores of 2-handles of $Z$. Let $D_i$ $(1\leq i\leq n)$ be a small 2-disk neighborhoods of $f^{-1}(p_i)$ in $S^2$. We may assume that each $f(D_i)$ is a core of a 2-handle of $Z$ and that $f(S^2-\coprod_{i=1}^n D_i)$ is contained in $Z^{(1)}$. For each $i$, take an arc $\gamma_i\subset S^2$ connecting $D_i$ and $D_{i+1}$, with $\nu(\gamma_i)$ its tubular neighborhood, so that $\Sigma:= (\cup_{i=1}^n D_i)\cup (\cup_{i=1}^{n-1} \nu(\gamma_i))$ is a 2-disk in $S^2$ and that each $f|_{\nu (\gamma_i)}$ is an embedding. We can isotope $f$ so that each $f(\nu(\gamma_i))$ is contained in $\partial Z^{(1)}$. Now we obtain a 2-handle of $Z$ whose attaching circle is $f(\partial \Sigma)$, by introducing canceling 2- and 3-handle pairs and sliding 2-handles along the band $f(\nu (\gamma_i))$'s. This 2-handle represents the class $[f]$. Since $S^2-\Sigma$ is a 2-disk, the attaching circle $f(\partial \Sigma) = f(\partial (S^2-\Sigma))$ bounds an immersed disk in $Z^{(1)}$. Hence this attaching circle is null-homotopic in $Z^{(1)}$. 
\end{proof} 

We next prove the main theorem. 
Let $K$ be the attaching circle of the handle representing the spherical homology class of $X_{S}^\circ$ with odd self intersection. According to the above lemma, we can assume $K$ is a null homotopic loop in the codimension zero submanifold $Y$ of $X_{S}^\circ$ consisting of its $0$- and $1$-handles. Hence after  (self crossing) clasp moves $K$ can be made to be an unknot. In paricular, if there is another handle represented by a small circle $C$ linking $K$ with zero framing, then by sliding $K$ over $C$ we can make $K$ an unknotted circle as shown in Figure~\ref{figa1}. 
\begin{figure}[ht!]\begin{center}
\includegraphics[width=2.8in]{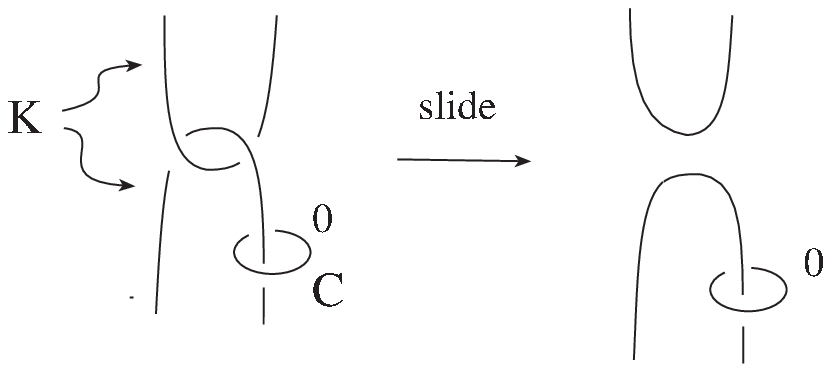}
\caption{Gluck twist to $X$ along $S$}
\label{figa1}
\end{center}
\end{figure}

Having this observation in mind, let us perform the Gluck twisting operation to $X$ along $S$ as shown in Figure~\ref{figa2}. Now by sliding the middle $1$-framed handle over $K$, we get a 2-handle $K_{0}$ with even framing. By using the above observation, we can isotope $K_{0}$ to the standard middle $0$-framed circle of the last picture of Figure~\ref{figa2}. 

\begin{figure}[ht!]
\begin{center}
\includegraphics[width=3.8in]{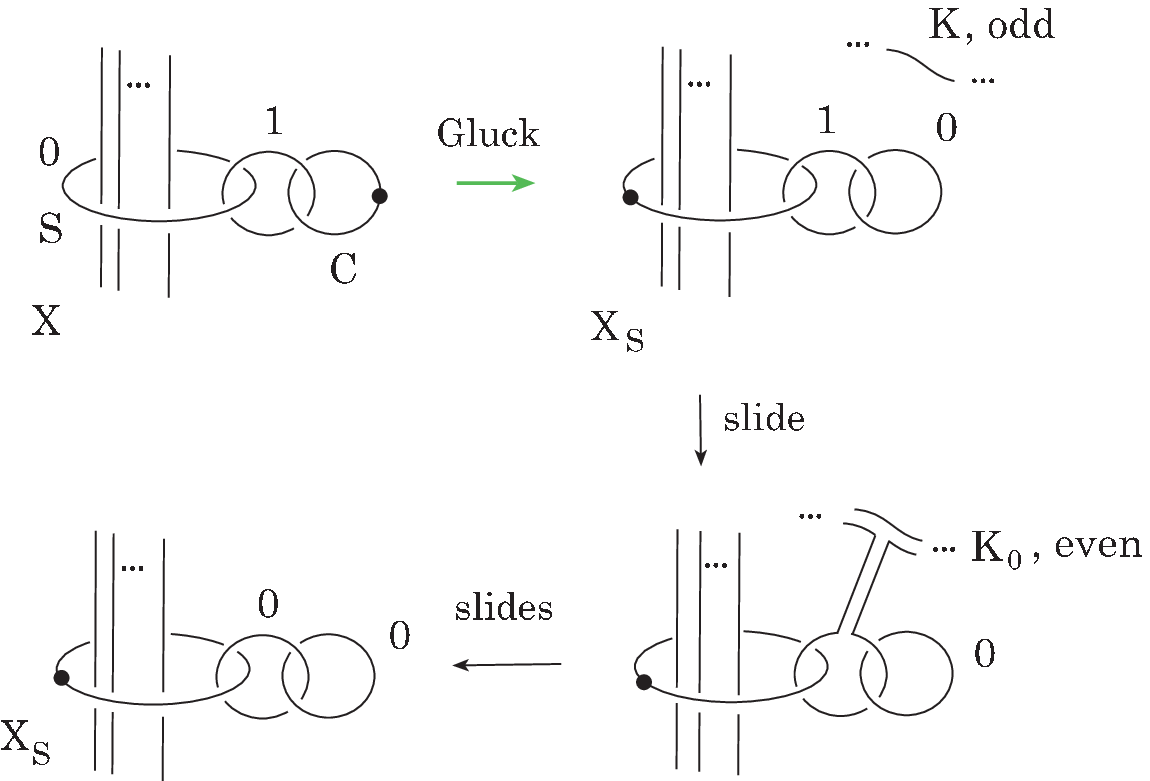}
\caption{Gluck twist to $X$ along $S$}
\label{figa2}
\end{center}
\end{figure}

Here consider the subhandlebody of $X_S$ described in the upper left diagram of Figure~\ref{fig4}.  
We reverse the Gluck twisting operation applied to $X_S$. Lemma~\ref{gluck_diagram} shows that this reversing operation is equivalent to exchanging $0$ and the dot as shown in the left procedure of Figure~\ref{fig4}. The right procedure of Figure~\ref{fig4} shows that this is a composition of undone operations (recall that every self-diffeomorphism of $\partial D^4$ extends to a self-diffeomorphism of $D^4$.). Hence $X_S$ is diffeomorphic to $X$. 
\qed

\begin{figure}[ht!]
\begin{center}
\includegraphics[width=3.4in]{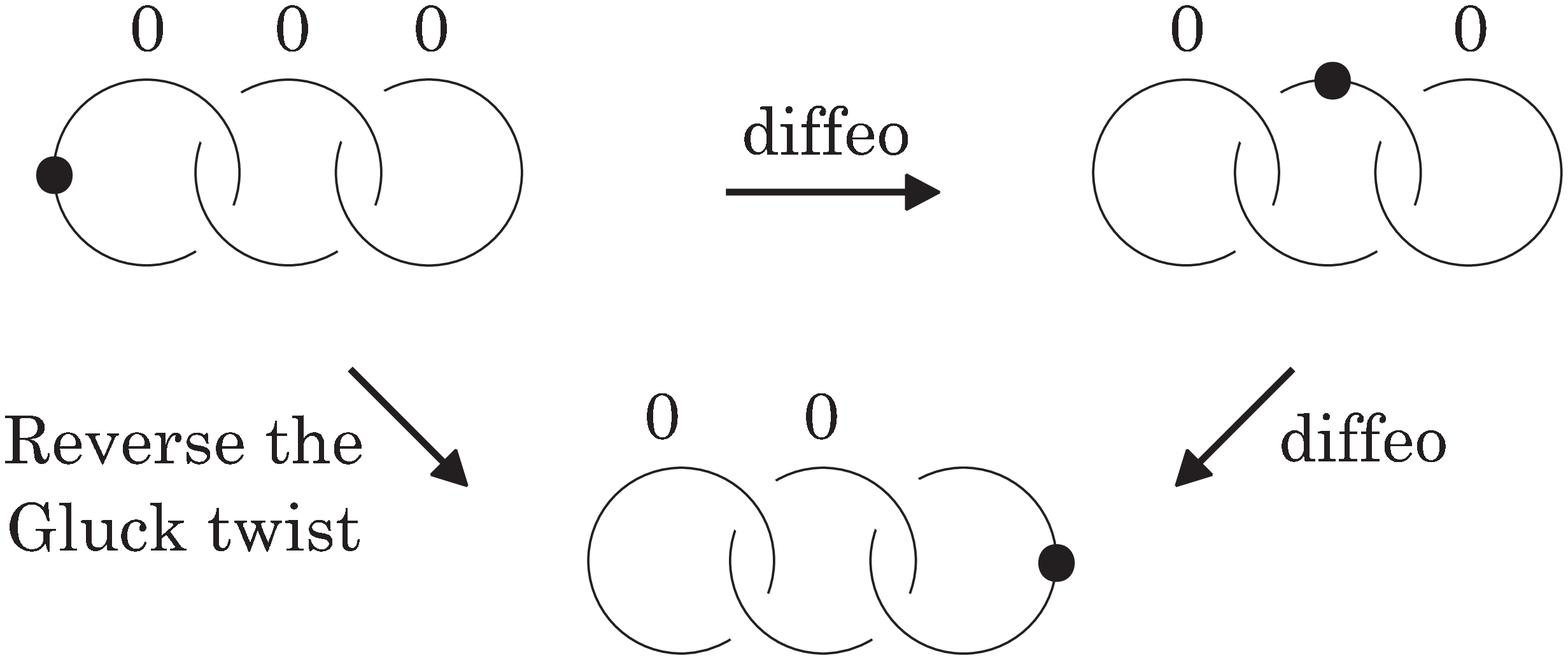}
\caption{Reversing the Gluck twist to $X$ along $S$}
\label{fig4}
\end{center}
\end{figure}




\begin{thebibliography}{20}
\bibitem{A3}S.\ Akbulut, \textit{Constructing a fake $4$-manifold by Gluck construction to a standard $4$-manifold}, Topology \textbf{27} (1988), no.\ 2, 239--243.

 \bibitem {A4}S. Akbulut, \textit{Cappell-Shaneson homotopy spheres are standard},  Ann.\ of Math.\ \textbf{171} (2010) 2171--2175.

\bibitem{AY1}S.\ Akbulut and K.\ Yasui, \textit{Corks, Plugs and exotic structures}, Journal of G\"{o}kova Geometry Topology, volume \textbf{2} (2008), 40--82.  
\bibitem{DG}F.\ Ding and H.\ Geiges, \textit{The diffeotopy group of $S^1\times S^2$ via contact topology}, Compos. Math. \textbf{146} (2010), no.\ 4, 1096--1112.

\bibitem{Gl2}H.\ Gluck, \textit{The embedding of two-spheres in the four-sphere}, Trans.\ Amer.\ Math.\ Soc. \textbf{104} (1962), 308--333.
\bibitem{G2}R.\ Gompf, \textit{More Cappell-Shaneson spheres are standard}, Algebr.\ Geom.\ Topol.\ \textbf{10} (2010), no.\ 3, 1665--1681. 
\bibitem{GS}R.\,E.\ Gompf and A.\,I.\ Stipsicz, 
\textit{$4$-manifolds and Kirby calculus}, Graduate Studies in Mathematics, \textbf{20}. American Mathematical Society, 1999.
\bibitem{Gor}C.\ McA.\ Gordon, \textit{Knots in the 4 -sphere}, Comment.\ Math.\ Helv.\ \textbf{51} (1976), no.\ 4, 585--596.


\bibitem{NS}D.\ Nash and A.\,I.\ Stipsicz, \textit{Gluck twist on a certain family of 2-knots}, arXiv:1103.5571. 





\end{thebibliography}
\end{document}